\renewcommand{\@begintheorem}[2]{\it \trivlist            
      \item[\hskip \labelsep{\bf #1\ #2{\rm :}}]}         
\renewcommand{\@opargbegintheorem}[3]{\it \trivlist       
      \item[\hskip \labelsep{\bf #1\ #2\ {\rm (#3)\/:}}]}
\def\@sect#1#2#3#4#5#6[#7]#8{\ifnum #2>\c@secnumdepth
     \def\@svsec{}\else 
     \refstepcounter{#1}\edef\@svsec{\csname the#1\endcsname{.}\hskip 1em }\fi
     \@tempskipa #5\relax
      \ifdim \@tempskipa>\z@ 
        \begingroup #6\relax
          \@hangfrom{\hskip #3\relax\@svsec}{\interlinepenalty \@M #8\par}
        \endgroup
       \csname #1mark\endcsname{#7}\addcontentsline
         {toc}{#1}{\ifnum #2>\c@secnumdepth \else
                      \protect\numberline{\csname the#1\endcsname}\fi
                    #7}\else
        \def\@svsechd{#6\hskip #3\@svsec #8\csname #1mark\endcsname
                      {#7}\addcontentsline
                           {toc}{#1}{\ifnum #2>\c@secnumdepth \else
                             \protect\numberline{\csname the#1\endcsname}\fi
                       #7}}\fi
     \@xsect{#5}}
\newcommand{\Delete}[1]{}
\theoremstyle{plain}
\newtheorem{Thm}{Theorem}[section]
\newtheorem{Lem}[Thm]{Lemma}
\newtheorem{Prop}[Thm]{Proposition}
\newcommand{\bD}{\ensuremath{\mathbb{D}}}
\newcommand{\bH}{\ensuremath{\mathbb{H}}}
\newcommand{\bI}{\ensuremath{\mathbb{I}}}
\newcommand{\bO}{\ensuremath{\mathbb{O}}}
\newcommand{\bT}{\ensuremath{\mathbb{T}}}
\newcommand{\bZ}{\ensuremath{\mathbb{Z}}}
\newcommand{\cF}{\ensuremath{\mathcal{F}}}
\newcommand{\cS}{\ensuremath{\mathcal{S}}}
\title{The competition numbers of regular polyhedra}
\author{
\begin{tabular}{c}
{\sc Yoshio SANO}\\
\\
Research Institute for Mathematical Sciences, \\
Kyoto University, Kyoto 606-8502, Japan. \\ 
{\tt sano@kurims.kyoto-u.ac.jp}
\end{tabular}
}
\date{May 2009}
\begin{document}

\maketitle

\begin{abstract}
The notion of a competition graph was introduced by J. E. Cohen in 1968. 
The {\it competition graph} $C(D)$ of a digraph $D$ 
is a (simple undirected) graph which 
has the same vertex set as $D$ 
and has an edge between two distinct vertices $x$ and $y$ 
if and only if 
there exists a vertex $v$ in $D$ such that 
$(x,v)$ and $(y,v)$ are arcs of $D$. 
For any graph $G$, $G$ together with sufficiently many new isolated vertices 
is the competition graph of some acyclic digraph. 
In 1978, F. S. Roberts defined 
the {\it competition number} $k(G)$ of a graph $G$ 
as the minimum number of such isolated vertices. 
In general, it is hard to compute the competition number 
$k(G)$ for a graph $G$ and it has been one of 
important research problems in the study of 
competition graphs 
to characterize a graph by computing its competition number. 

It is well known that there exist $5$ kinds of regular polyhedra 
in the three dimensional Euclidean space: 
a tetrahedron, a hexahedron, an octahedron, 
a dodecahedron, and an icosahedron.  
We regard a polyhedron as a graph. 
The competition numbers of 
a tetrahedron, a hexahedron, an octahedron, and a dodecahedron 
are easily computed by using known results on competition numbers. 
In this paper, we focus on an icosahedron and 
give the exact value of the competition number 
of an icosahedron. 
\end{abstract}

\noindent
{\bf Keywords:} 
competition graph; 
competition number; 
edge clique cover; 
regular polyhedra; 
icosahedron


\newpage
\section{Introduction}

Throughout this paper, all graphs $G$ are simple and undirected. 
The notion of a competition graph was introduced by J. E. Cohen \cite{Cohen1} 
in connection with a problem in ecology 
(see also \cite{Cohen2}).
The {\it competition graph} $C(D)$ of a digraph $D$ is a graph
which has the same vertex set 
as $D$ and has an edge between two distinct vertices 
$x$ and $y$ if and only if 
there exists a vertex $v$ in $D$ 
such that $(x,v)$ and $(y,v)$ are arcs of $D$. 
For any graph $G$, $G$ together 
with sufficiently many 
isolated vertices is the competition graph 
of an acyclic digraph. 
From this observation, 
F. S. Roberts \cite{MR0504054} defined the {\it competition number} $k(G)$ of
a graph $G$ to be the minimum number $k$ such that $G$ together with 
$k$ isolated vertices is the competition graph of an acyclic digraph: 
\begin{equation}
k(G):= \min \{k \in \bZ_{\geq 0} \mid G \cup I_k = C(D) 
\text{ for some acyclic digraph } D \}, 
\end{equation}
where $I_k$ denotes a set of $k$ isolated vertices. 

For a digraph $D$, 
an ordering $v_1, v_2, \ldots, v_n$ of the vertices of $D$
is called an {\it acyclic ordering} of $D$
if $(v_i,v_j) \in A(D)$ implies $i<j$.
It is well known that a digraph $D$ is acyclic if and only if
there exists an acyclic ordering of $D$.

A subset $S \subseteq V(G)$ of the vertex set of a graph $G$ 
is called a {\it clique} of $G$ if the subgraph $G[S]$ 
of $G$ induced by $S$ is a complete graph. 
For a clique $S$ of a graph $G$ and an edge $e$ of $G$,
we say {\it $e$ is covered by $S$} 
if both of the endpoints of $e$ are contained in $S$.
An {\it edge clique cover} of a graph $G$
is a family of cliques such that
each edge of $G$ is covered by some clique in the family 
(see \cite{MR770871} for applications of edge clique covers). 
The {\it edge clique cover number} $\theta_E(G)$ of a graph $G$ 
is the minimum size of an edge clique cover of $G$. 
A {\it vertex clique cover} of a graph $G$ 
is a family of cliques such that 
each vertex of $G$ is contained in some clique in the family. 
The {\it vertex clique cover number} $\theta_V(G)$ of a graph $G$
is the minimum size of a vertex clique cover of $G$.

R. D.  Dutton and R. C. Brigham \cite{MR712930} 
characterized the competition graph of an acyclic digraph 
in terms of an edge clique cover. 
(F. S. Roberts and J. E. Steif \cite{MR712932} 
characterized the competition graphs of loopless digraphs. 
J. R. Lundgren and J. S. Maybee \cite{MR712931} 
gave a characterization of graphs whose competition number 
is at most $m$.) 

However, R. J. Opsut \cite{MR679638} showed that 
the problem of determining 
whether a graph is the competition graph of an acyclic digraph 
or not is NP-complete. 
Therefore it follows that the computation of the 
competition number of a graph is an NP-hard problem, 
and thus it does not seem to be easy in general to compute 
$k(G)$ for an arbitrary graph $G$ 
(see \cite{kimsu}, \cite{MR1833332}, \cite{kr} 
for graphs whose competition numbers are known). 
It has been one of important research problems in the study of 
competition graphs to characterize a graph by computing 
its competition number. 

F. S. Roberts showed the following: 
\begin{Thm}[Roberts \cite{MR0504054}]\label{thm:chordal}
If $G$ is a chordal graph without isolated vertices, then 
$k(G)=1$. 
\end{Thm}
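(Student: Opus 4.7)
The plan is to show $k(G) \geq 1$ and $k(G) \leq 1$ separately. For the lower bound, I would argue that any acyclic digraph $D$ admits an acyclic ordering, and its last vertex has no out-neighbors, hence shares no common prey with any other vertex and is isolated in $C(D)$. Therefore every competition graph of an acyclic digraph contains an isolated vertex, and when $G$ itself has no isolated vertices, the definition of $k(G)$ forces $k(G) \geq 1$.

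For the upper bound, I would exploit the defining property of chordal graphs: the existence of a perfect elimination ordering $w_1, w_2, \ldots, w_n$ of $V(G)$, meaning that for each $i$ the set $N^+_i := N(w_i) \cap \{w_{i+1}, \ldots, w_n\}$ is a clique of $G$. Defining $Q_i := \{w_i\} \cup N^+_i$ then yields $n$ cliques of $G$ that together form an edge clique cover, since any edge $w_i w_j$ with $i < j$ lies in $Q_i$.

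The construction is to adjoin a single new vertex $u$ and define an acyclic digraph $D$ on $V(G) \cup \{u\}$ with acyclic ordering $w_n, w_{n-1}, \ldots, w_1, u$: declare the in-neighborhood of $w_{i-1}$ in $D$ to be $Q_i$ for each $i = 2, \ldots, n$, declare the in-neighborhood of $u$ to be $Q_1$, and give $w_n$ an empty in-neighborhood. Since $Q_i \subseteq \{w_i, w_{i+1}, \ldots, w_n\}$, every arc of $D$ points from an earlier vertex to a later one in the chosen ordering, so $D$ is acyclic. By construction the edges of $C(D)$ are exactly $\bigcup_i \binom{Q_i}{2} = E(G)$, the vertex $u$ is a sink and so isolated in $C(D)$, and every vertex of $G$ retains its neighbors from $G$ and remains non-isolated by hypothesis. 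Hence $C(D) = G \cup I_1$, giving $k(G) \leq 1$.

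The main obstacle is identifying an edge clique cover of $G$ that can be simultaneously realized as in-neighborhoods in an acyclic digraph using only one extra vertex, and the perfect elimination ordering is exactly what makes this possible: the cliques $Q_i$ are automatically contained in consecutive suffixes of the PEO, so reversing the PEO and appending $u$ at the end allows each $Q_i$ with $i \geq 2$ to be absorbed as the in-neighborhood of the vertex immediately preceding its smallest-indexed element, with $u$ available to absorb the leftover clique $Q_1$.
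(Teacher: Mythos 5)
Your proof is correct. The paper does not prove this statement itself---it is quoted as a known result of Roberts \cite{MR0504054}---and your argument is precisely the standard one from the literature: the last vertex of an acyclic ordering is a sink and hence isolated in the competition graph, which gives $k(G)\geq 1$ for graphs without isolated vertices, while a perfect elimination ordering yields the cliques $Q_i=\{w_i\}\cup N^+_i$ that can be assigned as in-neighborhoods along the reversed ordering with one appended vertex, giving $k(G)\leq 1$. Both directions check out as written.
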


\begin{Thm}[Roberts \cite{MR0504054}]\label{thm:tri-free}
If $G$ is a connected triangle-free graph with $|V(G)|>1$, then 
\begin{equation}
k(G) = |E(G)|-|V(G)|+2. 
\end{equation}
\end{Thm}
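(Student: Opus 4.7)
The plan is to establish the two inequalities separately. For the lower bound, suppose $D$ is an acyclic digraph with $C(D) = G \cup I_k$ on $N := |V(G)| + k$ vertices. Since $G$ is triangle-free, so is $C(D)$. Fix an acyclic ordering $u_1, \ldots, u_N$ of $V(D)$. For each $i$, the in-neighborhood $N^-(u_i) := \{u : (u, u_i) \in A(D)\}$ is contained in $\{u_1, \ldots, u_{i-1}\}$ and is a clique in $C(D)$, so triangle-freeness forces $|N^-(u_i)| \leq 2$; moreover $N^-(u_1) = \emptyset$ and $|N^-(u_2)| \leq 1$. Every edge of $C(D)$ lies inside some $N^-(u_i)$, so
\[
|E(G)| = |E(C(D))| \leq \sum_{i=1}^{N} \binom{|N^-(u_i)|}{2} \leq N - 2 = |V(G)| + k - 2,
\]
which rearranges to $k \geq |E(G)| - |V(G)| + 2$.

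For the upper bound, I construct an acyclic digraph $D$ achieving equality. Using connectedness of $G$, I pick a spanning tree $T$ and a BFS ordering $v_1, \ldots, v_n$ of $V(G)$ so that every $v_i$ with $i \geq 2$ has a $T$-parent in $\{v_1, \ldots, v_{i-1}\}$. The key combinatorial step is to choose $n - 2$ distinct edges $e_3, \ldots, e_n$ of $G$ such that both endpoints of $e_i$ lie in $\{v_1, \ldots, v_{i-1}\}$. Existence follows from Hall's theorem applied to the bipartite graph with parts $\{v_3, \ldots, v_n\}$ and $E(G)$, where $v_i$ is joined to $e$ iff both endpoints of $e$ precede $v_i$: for any $S \subseteq \{v_3, \ldots, v_n\}$ with largest index $\ell$, the neighborhood $N(S)$ equals $E(G[\{v_1, \ldots, v_{\ell - 1}\}])$, which already contains the $\ell - 2$ tree edges of $T$ inside that set, while $|S| \leq \ell - 2$.

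Setting $k := |E(G)| - n + 2$ and listing $E(G) \setminus \{e_3, \ldots, e_n\}$ as $f_1, \ldots, f_k$, I take $V(D) := \{v_1, \ldots, v_n\} \cup \{x_1, \ldots, x_k\}$; for each $e_i = \{v_a, v_b\}$ I insert arcs $(v_a, v_i)$ and $(v_b, v_i)$, and for each $f_j = \{v_a, v_b\}$ I insert arcs $(v_a, x_j)$ and $(v_b, x_j)$. The order $v_1, \ldots, v_n, x_1, \ldots, x_k$ witnesses that $D$ is acyclic. A direct inspection of the in-neighborhoods shows $E(C(D)) = \{e_3, \ldots, e_n\} \cup \{f_1, \ldots, f_k\} = E(G)$, while the $x_j$ are isolated in $C(D)$ (they have out-degree $0$ in $D$); hence $C(D) = G \cup I_k$.

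The main obstacle is the Hall-type matching argument, which is where connectedness enters essentially through the tree-edge count that supplies Hall's condition. Triangle-freeness is used only in the lower bound, to cap in-neighborhood sizes at $2$; the upper bound construction itself works for any connected graph, but for graphs containing triangles the quantity $|E(G)| - |V(G)| + 2$ generally overestimates $k(G)$.
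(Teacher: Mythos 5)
Your proof is correct. Note, however, that the paper does not prove this statement at all: it is quoted as a known theorem of Roberts and used as a black box, so there is no in-paper argument to compare against. Your lower bound is sound (in a triangle-free competition graph every in-neighborhood of an acyclic digraph is a clique of size at most $2$, the first two vertices of an acyclic ordering contribute nothing, and every edge lies in some in-neighborhood); it is in fact a direct proof of the specialization of Opsut's bound, Theorem 1.4 of the paper, since triangle-freeness gives $\theta_E(G)=|E(G)|$. Your upper bound construction is also correct, and the Hall verification goes through because the nested neighborhoods reduce the condition to the single set with largest index $\ell$, where the $\ell-2$ parent edges of $v_2,\dots,v_{\ell-1}$ supply enough edges. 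One small simplification: Hall's theorem is not needed, since you can take $e_i:=\{v_{i-1},p(v_{i-1})\}$ for $i=3,\dots,n$, where $p$ denotes the $T$-parent; these are $n-2$ distinct tree edges, each with both endpoints among $\{v_1,\dots,v_{i-1}\}$, which gives the required system of representatives explicitly. Your closing observation that the construction works for any connected graph but is only tight in the triangle-free case is also accurate.
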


From the above theorems, we can compute the competition numbers of 
many kinds of graphs: for example, 
the competition number of a complete bipartite graph $K_{n,n}$ 
is equal to $n^2-2n+2$. 
S. -R. Kim and Y. Sano gave a formula for the competition numbers of 
complete tripartite graphs $K_{n,n,n}$, which we cannot obtain 
by the above theorems. 
\begin{Thm}[Kim and Sano \cite{MR2467965}]\label{thm:KimSano}
For $n \geq 2$, 
\begin{equation}
k(K_{n,n,n}) = n^2-3n+4. 
\end{equation}
\end{Thm}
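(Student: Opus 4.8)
The plan is to prove the equality by establishing matching upper and lower bounds on $k(K_{n,n,n})$. Throughout write $A,B,C$ for the three colour classes, each of size $n$, so that $|V(K_{n,n,n})|=3n$ and $|E(K_{n,n,n})|=3n^2$, the edges splitting into $n^2$ edges inside each of the three pairs $AB$, $BC$, $CA$. Since $K_{n,n,n}$ is tripartite, its only cliques of size at least $2$ are single edges and triangles $\{a,b,c\}$ with $a\in A,\,b\in B,\,c\in C$, and every triangle meets each pair in exactly one edge. The combinatorial backbone of both bounds is the Latin-square triangle decomposition: labelling $A=\{a_i\}$, $B=\{b_j\}$, $C=\{c_\ell\}$ with $i,j,\ell\in\bZ_n$, the $n^2$ triangles $T_{i,j}=\{a_i,b_j,c_{i+j}\}$ are pairwise edge-disjoint and cover $E(K_{n,n,n})$.

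For the upper bound I would build an explicit acyclic digraph $D$ on the $3n$ original vertices together with $n^2-3n+4$ new vertices, realising each $T_{i,j}$ as the in-neighbourhood $N^-_D(p)$ of a distinct prey $p$. Because the $T_{i,j}$ edge-cover the graph, the competition graph then contains $K_{n,n,n}$; choosing prey so that no new vertex ever shares an in-neighbourhood with another vertex keeps every new vertex isolated and creates no spurious edge, so $C(D)=K_{n,n,n}\cup I_{n^2-3n+4}$ exactly. The scheduling task is to order all vertices so that each triangle is placed after its three members, using as many original vertices as possible as prey and filling in with new ones. Reusing exactly $3n-4$ of the original vertices as prey (all but four of them) leaves $n^2-(3n-4)=n^2-3n+4$ triangles whose prey must be new vertices, giving $k(K_{n,n,n})\le n^2-3n+4$.

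For the lower bound, suppose $C(D)=K_{n,n,n}\cup I_k$ with $D$ acyclic, put $N=3n+k$, and fix an acyclic ordering $v_1,\dots,v_N$. The in-neighbourhoods $N^-_D(v_j)$ of size at least $2$ form an edge clique cover, hence each is an edge or a triangle; let $t$ be the number of triangular ones and $s$ the number of $2$-element ones. Since $v_1,v_2$ have in-degree at most $1$ they cover no edge, and a triangle needs three predecessors, so covering the $n^2$ edges in each pair gives $3t+s\ge 3n^2$, while $t+s\le N-2$ and $t\le N-3$. These force $2t\ge 3n^2-(N-2)$ and hence $N\ge n^2+3$. It remains to exclude $N=n^2+3$. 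In that tight case the inequalities collapse: $t=n^2$, the triangles are pairwise distinct and edge-disjoint (a decomposition), and they occupy precisely positions $4,5,\dots,N$. Then $N^-_D(v_4)$ is a triangle inside $\{v_1,v_2,v_3\}$, so $\{v_1,v_2,v_3\}$ is itself that triangle, while $N^-_D(v_5)$ is a \emph{different} triangle inside $\{v_1,v_2,v_3,v_4\}$; it therefore contains $v_4$ and exactly two of $v_1,v_2,v_3$, and so shares the edge between those two with $N^-_D(v_4)$, contradicting edge-disjointness. Thus $N\ge n^2+4$, i.e. $k(K_{n,n,n})\ge n^2-3n+4$.

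The parameter count and the Latin-square cover are routine. The main obstacle is the final step of the lower bound: the elementary counting only yields $N\ge n^2+3$, and the extra unit separating $n^2-3n+3$ from the true value $n^2-3n+4$ must be extracted from the rigidity of the tight case. The crux is first to pin down that positions $4,\dots,N$ are then forced to carry an honest triangle decomposition, and then to locate the unavoidable edge shared by the triangles at $v_4$ and $v_5$. Dually, on the construction side one must check that adding exactly one vertex beyond the impossible count $n^2+3$ supplies precisely the slack needed to schedule the decomposition acyclically without such a clash, which is what makes the two bounds meet.
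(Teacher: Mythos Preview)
The present paper does not prove this theorem; it is quoted from \cite{MR2467965} and invoked only as a black box to compute $k(\bO)=k(K_{2,2,2})=2$, so there is no in-paper argument to compare your proposal against.

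Your outline is the natural two-sided strategy and is essentially sound, but two places need tightening. In the lower bound, from $t=n^{2}$ and $t+s\le N-2=n^{2}+1$ you obtain only $s\le 1$, not $s=0$, so you cannot yet assert that the $n^{2}$ triangular in-neighbourhoods are edge-disjoint. The repair is short: whether $N^{-}_{D}(v_{5})$ equals $\{v_{1},v_{2},v_{3}\}$ (which is forced if $v_{4}$ is one of the added isolated vertices, since an isolated vertex cannot lie in a clique of size $\ge 2$) or is the other triangle through $v_{4}$, the in-neighbourhoods at positions $4$ and $5$ either coincide or share an edge; in both cases the $n^{2}$ triangular in-neighbourhoods cover at most $3n^{2}-1$ distinct edges, and the only possible extra size-$2$ clique $\{v_{1},v_{2}\}$ at position $3$ is already contained in $N^{-}_{D}(v_{4})$, so the $3n^{2}$ edges of $K_{n,n,n}$ cannot all be covered. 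This closes the gap. On the upper-bound side, the bare assertion that exactly $3n-4$ of the triangles $T_{i,j}$ can be assigned to original vertices in an acyclic way is precisely the constructive content of \cite{MR2467965}; the Latin-square decomposition by itself does not hand you such a schedule, so this step needs an explicit construction rather than a parameter count.
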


R. J. Opsut gave the following two lower bounds 
for the competition number of a graph. 

\begin{Thm}[Opsut \cite{MR679638}, Proposition 5]\label{thm:OpsutBdE}
For any graph $G$, 
\begin{equation}\label{eq:OpsutBdE}
k(G) \geq \theta_E(G)-|V(G)|+2. 
\end{equation}
\end{Thm}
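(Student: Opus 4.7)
The plan is to exploit the fundamental correspondence between acyclic digraphs and edge clique covers: given an acyclic digraph $D$ with $C(D) = G \cup I_k$, the family of in-neighborhoods of the vertices of $D$ forms an edge clique cover of $C(D)$, and a simple pigeonhole on an acyclic ordering shows that two of these in-neighborhoods are too small to cover any edge, which yields the stated bound.

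First I would set $k := k(G)$ and fix an acyclic digraph $D$ on $n := |V(G)| + k$ vertices such that $C(D) = G \cup I_k$. For each $v \in V(D)$, let $N^-(v) := \{u \in V(D) \mid (u,v) \in A(D)\}$ denote the in-neighborhood of $v$. I would then verify two things: (i) each $N^-(v)$ is a clique of $C(D)$, since any two distinct elements $x,y \in N^-(v)$ share $v$ as a common out-neighbor and so are adjacent in $C(D)$; and (ii) the family $\cF := \{N^-(v) \mid v \in V(D)\}$ covers every edge of $C(D)$, because an edge $xy \in E(C(D))$ is by definition witnessed by some common out-neighbor $v$, which places both endpoints into $N^-(v)$. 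Intersecting each $N^-(v)$ with $V(G)$ (the isolated vertices in $I_k$ lie in no edge of $G \cup I_k$) therefore yields an edge clique cover of $G$.

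Next I would pick an acyclic ordering $v_1, v_2, \ldots, v_n$ of $D$. By the definition of acyclic ordering, $N^-(v_1) = \emptyset$ and $N^-(v_2) \subseteq \{v_1\}$, so neither of these two cliques covers any edge of $G$. Hence the remaining $n - 2$ cliques already form an edge clique cover of $G$, which gives
\begin{equation*}
\theta_E(G) \;\leq\; n - 2 \;=\; |V(G)| + k(G) - 2,
\end{equation*}
and rearranging produces \eqref{eq:OpsutBdE}.

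There is no single hard step here; the only place requiring care is the bookkeeping between $C(D) = G \cup I_k$ and $G$ itself, namely confirming that the cliques produced by the in-neighborhoods are still cliques (and cover all edges) after restriction to $V(G)$. This is immediate because the $k$ added vertices are isolated in $C(D)$ and so belong to no edge, so they can be discarded without affecting edge coverage. Everything else is a direct pigeonhole on the acyclic ordering.
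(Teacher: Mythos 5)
Your proof is correct. The paper itself gives no proof of this statement (it is quoted from Opsut's paper as Theorem \ref{thm:OpsutBdE}), and your argument --- taking the in-neighborhoods of an acyclic digraph realizing $G \cup I_{k(G)}$ as an edge clique cover and discarding the first two cliques in an acyclic ordering, which are empty or a singleton and hence cover no edge --- is exactly the standard argument behind Opsut's Proposition 5, including the correct observation that restricting the cliques to $V(G)$ loses nothing since the added vertices are isolated.
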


\begin{Thm}[Opsut \cite{MR679638}, Proposition 7]\label{thm:OpsutBdV}
For any graph $G$, 
\begin{equation}\label{eq:OpsutBdV}
k(G) \geq \min \{ \theta_V(N_{G}(v)) \mid v \in V(G) \}, 
\end{equation}
where $N_G(v):=\{u \in V(G) \mid uv \in E(G) \}$ 
is the open neighborhood of a vertex $v$ in the graph $G$. 
\end{Thm}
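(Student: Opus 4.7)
The plan is to start from an optimal realization of $G$ as a competition graph and squeeze the bound out of a carefully chosen vertex. I would let $k = k(G)$ and fix an acyclic digraph $D$ with $V(D) = V(G) \cup I_k$ and $C(D) = G \cup I_k$, and then, using acyclicity, fix an acyclic ordering $v_1,\dots,v_{n+k}$ of $V(D)$, where $n = |V(G)|$. The key choice is to let $u$ be the \emph{last} vertex of $V(G)$ appearing in this ordering; since every vertex strictly after $u$ in the ordering must lie in $I_k$, the bound $|N^+_D(u)| \leq k$ is immediate.

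Next, I would establish the inequality $\theta_V(N_G(u)) \leq |N^+_D(u)|$. For each $w \in N^+_D(u)$, the set $N^-_D(w)$ is a clique of $C(D) = G \cup I_k$; any vertex $x \in N^-_D(w) \setminus \{u\}$ is adjacent to $u$ in $G \cup I_k$ and so must belong to $V(G)$, because isolated vertices of $G \cup I_k$ have no neighbours. Thus $S_w := N^-_D(w) \setminus \{u\}$ is a clique of $G$ contained in $N_G(u)$. To see that $\{S_w : w \in N^+_D(u)\}$ is a vertex clique cover of $G[N_G(u)]$, I would observe that for any $u' \in N_G(u)$ the edge $uu'$ of $G \subseteq C(D)$ forces a common out-neighbour $w$ of $u$ and $u'$ in $D$, whence $u' \in S_w$. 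Chaining the estimates gives
\[
\min_{v \in V(G)} \theta_V(N_G(v)) \;\leq\; \theta_V(N_G(u)) \;\leq\; |N^+_D(u)| \;\leq\; k \;=\; k(G).
\]

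The main obstacle is more bookkeeping than conceptual: one must separate $V(G)$ from $I_k$ cleanly in two places, namely in the choice of $u$ (to force $|N^+_D(u)| \leq k$) and in the claim $S_w \subseteq N_G(u)$ (using that no isolated vertex of $G \cup I_k$ can share a clique with $u$). Degenerate situations such as $G$ edgeless or $V(G)$ empty are absorbed silently, since then the right-hand side of the desired inequality is $0$ and the bound is trivial.
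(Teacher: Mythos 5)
Your argument is correct and complete: choosing $u$ as the last vertex of $V(G)$ in an acyclic ordering forces $|N^+_D(u)|\leq k$, and the sets $N^-_D(w)\setminus\{u\}$ for $w\in N^+_D(u)$ do form a vertex clique cover of $N_G(u)$ exactly as you describe. The paper states this result only as a citation to Opsut and gives no proof of its own, but your argument is the standard (indeed Opsut's original) one, so there is nothing to compare beyond noting that your reconstruction is the intended proof.
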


Recently, Y. Sano gave a generalization of 
the above two lower bounds as follows: 

Let $G$ be a graph and $F \subseteq E(G)$ be a subset of the edge set of $G$. 
An {\it edge clique cover} of $F$ in $G$ is a family of cliques of $G$ 
such that each edge in $F$ is covered by some clique in the family. 
We define the {\it edge clique cover number} 
$\theta_E(F;G)$ of $F \subseteq E(G)$ in $G$ as 
the minimum size of an edge clique cover of $F$ in $G$: 
\begin{equation}
\theta_E(F;G) := \min \{|\cS| \mid \cS \text{ is an edge clique cover of } 
F \text{ in } G \}. 
\end{equation}
Note that 
the edge clique cover number 
$\theta_E(E(G); G)$ 
of $E(G)$ in a graph $G$ is equal to 
the edge clique cover number $\theta_E(G)$ of the graph $G$. 

Let $U \subseteq V(G)$ be a subset of the vertex set 
of a graph $G$. We define 
\begin{eqnarray}
N_G[U] &:=& \{v \in V(G) \mid v 
\text{ is adjacent to a vertex in } U \} \cup U, \\
E_G[U] &:=& \{e \in E(G) \mid e \text{ has an endpoint in } U \}. 
\end{eqnarray}
We denote by the same symbol $N_G[U]$ the subgraph of $G$ induced by $N_G[U]$. 
Note that $E_G[U]$ is contained in the edge set of the subgraph $N_G[U]$. 
We denote by ${V \choose m}$ the set of all $m$-subsets of a set $V$. 

\begin{Thm}[Sano \cite{Sano}]\label{thm:Sano}
Let $G=(V,E)$ be a graph. 
Let $m$ be an integer such that $1 \leq m \leq |V|$. 
Then 
\begin{equation}\label{eq:lem:main}
k(G) \geq 
\min_{U \in {V \choose m}}
\theta_E(E_{G}[U]; N_{G}[U]) -m+1. 
\end{equation}
\end{Thm}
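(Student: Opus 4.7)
The plan is to generalize Opsut's argument for Theorem~\ref{thm:OpsutBdV} by taking $U$ to be a suffix of length $m$ in an acyclic ordering of $V(G)$. Set $k := k(G)$ and, by the definition of the competition number, fix an acyclic digraph $D$ with $V(D) = V(G) \cup I_k$ and $C(D) = G \cup I_k$, together with an acyclic ordering of $D$. Write $v_1, v_2, \ldots, v_n$ for the vertices of $V(G)$ listed in the induced order, and set $U := \{v_{n-m+1}, \ldots, v_n\}$. For each $w \in V(D)$ let $C_w := \{u \in V(D) \mid (u,w) \in A(D)\}$; by the definition of the competition graph, each $C_w$ is a clique of $C(D)$ and $\{C_w : w \in V(D)\}$ is an edge clique cover of $G \cup I_k$.

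The central claim is that the subfamily
\[
\cF := \{C_w \mid w \in I_k \cup (U \setminus \{v_{n-m+1}\})\},
\]
which has size at most $k+m-1$, is an edge clique cover of $E_G[U]$ consisting of cliques of $N_G[U]$. To check this, pick any $e = \{x,y\} \in E_G[U]$ with $x \in U$, and choose $w$ with $\{x,y\} \subseteq C_w$. By acyclicity, $w$ comes strictly after $x$ in the ordering; since the position of $x$ is no earlier than that of $v_{n-m+1}$, either $w \in I_k$, or $w \in V(G)$, in which case $w \in U \setminus \{v_{n-m+1}\}$ (because $U$ is the suffix of $V(G)$ in the induced order). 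Moreover, since the vertices of $I_k$ are isolated in $C(D)$, any $C_w$ with $|C_w| \geq 2$ lies entirely in $V(G)$; and because $x \in C_w \cap U$, every other vertex of $C_w$ is adjacent to $x$ in $G$, so $C_w \subseteq N_G[U]$.

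Combining these facts yields $\theta_E(E_G[U]; N_G[U]) \leq k + m - 1$, which rearranges to the desired inequality
\[
k(G) \geq \min_{U' \in {V \choose m}} \theta_E(E_G[U']; N_G[U']) - m + 1.
\]
The main subtlety I expect to handle carefully is the ``$-1$'' in the bound: it reflects that $C_{v_{n-m+1}}$ consists, by acyclicity, only of vertices preceding $v_{n-m+1}$ in the ordering, all of which lie outside $U$, so $C_{v_{n-m+1}}$ covers no edge of $E_G[U]$ and may be omitted from $\cF$. Beyond this bookkeeping, the argument amounts to the standard Dutton--Brigham-style decomposition of $C(D)$ into in-neighborhood cliques, together with the crucial observation that the \emph{suffix} choice of $U$ forces the covering vertex $w$ of every edge in $E_G[U]$ into the small index set $I_k \cup (U \setminus \{v_{n-m+1}\})$.
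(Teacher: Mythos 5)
Your proof is correct. The paper does not actually prove Theorem~\ref{thm:Sano} --- it is imported without proof from the preprint \cite{Sano} --- but your argument (taking the in-neighborhood cliques $C_w$ of an acyclic digraph realizing $k(G)$, and observing that every edge of $E_G[U]$, for $U$ the last $m$ vertices of $G$ in an acyclic ordering, is covered by $C_w$ for some $w$ in $I_k$ or in the last $m-1$ positions of $U$) is precisely the standard extension of Opsut's proof of Theorem~\ref{thm:OpsutBdV}, i.e.\ the argument of the cited source. One cosmetic remark: the sets $C_w$ in your family $\cF$ that cover no edge of $E_G[U]$ need not be cliques of the induced subgraph $N_G[U]$, so to match the definition of $\theta_E(E_G[U];N_G[U])$ you should formally replace each $C_w$ by $C_w\cap N_G[U]$ (or discard the useless members); this changes nothing in the count $|\cF|\le k+m-1$.
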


In this paper, we compute the competition numbers 
of regular polyhedra. 
One of our motivation of this paper is coming from the following: 
G. Chen {\it et al.} 
mentioned that 
``the icosahedron -- a claw-free graph -- 
has competition number at least three." in Conclusion of their paper 
\cite{MR1761707} 
with referring Theorem \ref{thm:OpsutBdV}. 
But they didn't give the exact value of 
the competition number of an icosahedron. 
So we compute it! 
This paper gives the exact value of the competition number of an icosahedron.

\section{Regular Polyhedra}

It is well known that 
there are $5$ regular polyhedra (or Platonic solids) 
in the three dimensional Euclidean space: 
a tetrahedron $\bT$, a hexahedron $\bH$, an octahedron $\bO$, 
a dodecahedron $\bD$, and an icosahedron $\bI$. 
We regard these polyhedra as graphs (see Figure \ref{fig:polyhedra}). 
In this subsection, we give the exact values of the competition numbers 
of these $5$ regular polyhedra. 

The competition numbers of 
a tetrahedron, a hexahedron, an octahedron, and a dodecahedron 
are easily computed by using known results on competition numbers as follows: 

\begin{figure}
\begin{center}
\includegraphics[width=292pt]{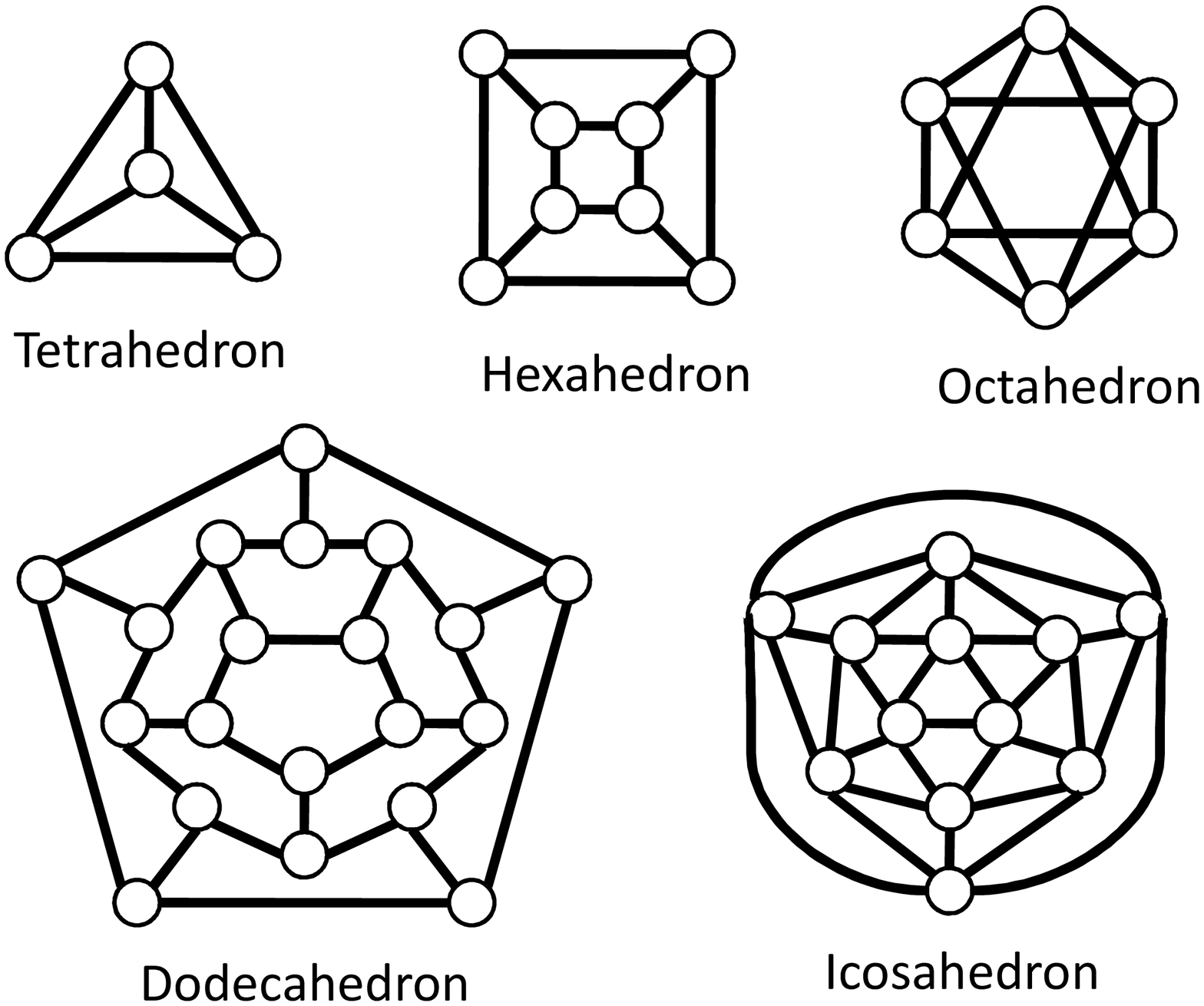} 
\caption{The $5$ regular polyhedra (Platonic solids)}
\label{fig:polyhedra}
\end{center}
\end{figure}

\begin{Thm}
Let $\bT$ be a tetrahedron. Then 
$k(\bT)=1$. 
\end{Thm}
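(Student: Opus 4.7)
The plan is to observe that the tetrahedron, viewed as a graph, is simply the complete graph $K_4$: its four vertices are pairwise adjacent because every pair of vertices of the tetrahedron lies on a common edge. In particular, $\bT$ has no isolated vertices.

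Next, I would note that $K_4$ is chordal, since it has no induced cycle at all of length greater than three (every four vertices span a complete subgraph, so no induced $4$-cycle exists). Having verified that $\bT$ is chordal and has no isolated vertex, I would invoke Theorem \ref{thm:chordal} (Roberts) directly to conclude $k(\bT)=1$.

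The only thing that deserves a brief remark, rather than a serious obstacle, is the identification of the tetrahedron with $K_4$; once that is in place, the statement is an immediate one-line consequence of Roberts' theorem. No lower bound argument via $\theta_E$ or $\theta_V$ is needed, and no explicit acyclic digraph has to be constructed.
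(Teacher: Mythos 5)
Your proposal is correct and matches the paper's proof exactly: the paper also identifies $\bT$ with $K_4$, observes it is chordal with no isolated vertices, and applies Theorem \ref{thm:chordal}. Nothing further is needed.
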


\begin{proof}
Since $\bT \cong K_4$ 
is a chordal graph without isolated vertices, 
we have $k(\bT)=1$ by Theorem \ref{thm:chordal}. 
\end{proof}

\begin{Thm}
Let $\bH$ be a hexahedron. Then 
$k(\bH)=6$. 
\end{Thm}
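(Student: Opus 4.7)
The plan is to recognize the hexahedron (the $1$-skeleton of the cube) as a connected, triangle-free graph and then invoke Theorem~\ref{thm:tri-free} directly. First I would note that $\bH$ is isomorphic to the $3$-dimensional hypercube graph $Q_3$, which has $|V(\bH)| = 8$ vertices and $|E(\bH)| = 12$ edges. Since $Q_3$ is bipartite (the two color classes correspond to vertices with an even or odd number of $1$'s in the binary coordinates, or equivalently to the two sets of four pairwise antipodal vertices in the cube), it contains no odd cycle and in particular no triangle. It is also clearly connected.

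Next I would apply Theorem~\ref{thm:tri-free}, which asserts that for any connected triangle-free graph $G$ with $|V(G)| > 1$ one has $k(G) = |E(G)| - |V(G)| + 2$. Substituting the counts for $\bH$ gives
\begin{equation*}
k(\bH) = 12 - 8 + 2 = 6,
\end{equation*}
as required.

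There is essentially no main obstacle here: the statement is an immediate consequence of Roberts' triangle-free formula once one observes that the cube graph is triangle-free and connected. The only routine verifications are the edge and vertex counts and the absence of triangles, which are both evident from the cube's bipartite structure.
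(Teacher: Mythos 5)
Your proof is correct and follows exactly the same route as the paper: observe that the hexahedron is a connected triangle-free graph on $8$ vertices with $12$ edges and apply Theorem~\ref{thm:tri-free} to get $k(\bH)=12-8+2=6$. The bipartiteness argument you add to justify triangle-freeness is a fine (if slightly more than necessary) verification of a hypothesis the paper takes as evident.
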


\begin{proof}
Since $\bH$ is a triangle-free connected graph with $|V(\bH)|>1$, 
we have $k(\bH)=12-8+2=6$ by Theorem \ref{thm:tri-free}. 
\end{proof}

\begin{Thm}
Let $\bO$ be an octahedron. Then 
$k(\bO)=2$. 
\end{Thm}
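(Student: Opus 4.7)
The plan is to establish the value $k(\bO)=2$ by proving matching lower and upper bounds.

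For the lower bound $k(\bO) \geq 2$, I would invoke Opsut's vertex clique cover bound (Theorem~\ref{thm:OpsutBdV}). The key structural observation is that in the octahedron, every vertex $v$ has exactly four neighbors (the vertices that are not antipodal to $v$), and these four neighbors induce a $4$-cycle. Since $C_4$ is triangle-free, any clique in it has at most two vertices, so its vertex clique cover number equals $2$. Therefore $\theta_V(N_\bO(v)) = 2$ for every vertex $v$, and Theorem~\ref{thm:OpsutBdV} immediately yields $k(\bO) \geq 2$.

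For the upper bound $k(\bO) \leq 2$, I would exhibit an acyclic digraph $D$ on $8$ vertices whose competition graph is $\bO \cup I_2$. Label the octahedron so that its three pairs of non-adjacent vertices are $\{1,2\}$, $\{3,4\}$, $\{5,6\}$, and let $x,y$ be two additional vertices. I would use the edge clique cover of $\bO$ by the four triangles
\begin{equation*}
T_1 = \{1,3,5\}, \quad T_2 = \{1,4,6\}, \quad T_3 = \{2,3,6\}, \quad T_4 = \{2,4,5\},
\end{equation*}
which is easily verified to cover all twelve edges of $\bO$. Then I would fix the acyclic ordering $1, 3, 5, 4, 6, 2, x, y$ and define $D$ by assigning in-neighborhoods $N^-_D(4) = T_1$, $N^-_D(2) = T_2$, $N^-_D(x) = T_3$, $N^-_D(y) = T_4$, with all other in-neighborhoods empty. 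Each assigned in-neighborhood lies strictly earlier in the ordering, so $D$ is acyclic.

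Finally, I would check that $C(D) = \bO \cup I_2$. Since the in-neighborhood of each vertex is a clique, the competing pairs are precisely the pairs contained in some $T_i$, which is exactly $E(\bO)$; and the vertices $x, y$ are sinks with no out-arcs, hence isolated in $C(D)$. Combined with the lower bound, this gives $k(\bO) = 2$. There is no real obstacle here: the lower bound is an immediate application of a stated theorem, and the upper bound requires only the explicit construction above, so the proof is essentially a verification.
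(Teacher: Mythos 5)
Your proof is correct, but it takes a genuinely different route from the paper. The paper disposes of the octahedron in one line: since $\bO \cong K_{2,2,2}$, the formula $k(K_{n,n,n}) = n^2-3n+4$ of Kim and Sano (Theorem~\ref{thm:KimSano}) gives $k(\bO) = 4-6+4 = 2$ immediately. You instead give a self-contained argument: the lower bound via Opsut's bound (Theorem~\ref{thm:OpsutBdV}), using the correct observation that each open neighborhood in $\bO$ induces a $C_4$ with $\theta_V(C_4)=2$, and the upper bound via an explicit edge clique cover by the four ``alternating'' triangles $T_1,\dots,T_4$ together with an acyclic ordering. I checked the details: the four triangles do cover all twelve edges, no two vertices from the same antipodal pair lie in a common $T_i$ (so no spurious edges arise), each prescribed in-neighborhood precedes its head in the ordering $1,3,5,4,6,2,x,y$, and $x,y$ end up isolated in $C(D)$. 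What the paper's approach buys is brevity, at the cost of invoking a nontrivial external theorem; what your approach buys is independence from that theorem and a concrete illustration of exactly the two-sided technique (Opsut-type lower bound plus edge-clique-cover construction) that the paper itself deploys for the harder case of the icosahedron.
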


\begin{proof}
Since $\bO \cong K_{2,2,2}$ 
is a complete tripartite graph, 
we have $k(\bO)=2^2 - 3 \cdot 2 +4=2$ 
by Theorem \ref{thm:KimSano}. 
\end{proof}

\begin{Thm}
Let $\bD$ be a dodecahedron. Then 
$k(\bD)=12$. 
\end{Thm}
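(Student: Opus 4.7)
The plan is to mirror the argument given for the hexahedron. The dodecahedron $\bD$ is a $3$-regular graph whose $2$-dimensional faces are all pentagons, so $\bD$ contains no triangles; it is clearly connected and has more than one vertex, so Theorem \ref{thm:tri-free} applies.

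Next I would simply plug in the parameters. The dodecahedron has $|V(\bD)|=20$ vertices and $|E(\bD)|=30$ edges (this is immediate from $3$-regularity: $|E(\bD)| = 3\cdot 20/2 = 30$, or equivalently from Euler's formula together with the fact that there are $12$ pentagonal faces). Therefore Theorem \ref{thm:tri-free} yields
\begin{equation*}
k(\bD) = |E(\bD)| - |V(\bD)| + 2 = 30 - 20 + 2 = 12.
\end{equation*}

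There is no real obstacle here: the only thing to verify before invoking Theorem \ref{thm:tri-free} is that $\bD$ really is triangle-free, which follows from the combinatorial structure of the Platonic solid (every face is a pentagon and no two non-adjacent faces share two vertices). The rest is an arithmetic substitution, exactly paralleling the hexahedron case.
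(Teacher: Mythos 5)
Your proposal is correct and follows exactly the same route as the paper: verify that $\bD$ is a connected triangle-free graph with more than one vertex (all faces being pentagons) and then apply Theorem \ref{thm:tri-free} to get $k(\bD)=30-20+2=12$. The extra justification you give for triangle-freeness and the vertex/edge counts is fine but adds nothing beyond the paper's one-line argument.
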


\begin{proof}
Since $\bD$ is a triangle-free connected graph with $|V(\bD)|$, 
we have $k(\bD)=30-20+2=12$ by Theorem \ref{thm:tri-free}. 
\end{proof}

Now, we focus on an icosahedron. 
The competition number of an icosahedron is 
given as follows: 

\begin{Thm}\label{thm:main}
Let $\bI$ be an icosahedron. Then 
$k(\bI)=4$. 
\end{Thm}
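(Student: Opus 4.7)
The plan is to prove $k(\bI) \ge 4$ and $k(\bI) \le 4$ separately. For the lower bound I apply Theorem~\ref{thm:Sano} with $m = 3$, reducing the task to the claim that
\begin{equation*}
\theta_E(E_{\bI}[U]; N_{\bI}[U]) \ge 6
\end{equation*}
for every three-element $U \subseteq V(\bI)$, which yields $k(\bI) \ge 6 - 3 + 1 = 4$. By the vertex-, edge- and face-transitivity of the icosahedron, one representative suffices from each isomorphism class of the induced subgraph $\bI[U]$: $K_3$ (a face), $P_3$ (a length-two path), $K_2 \cup K_1$ (an edge plus an isolated vertex), and $\overline{K_3}$ (three pairwise non-adjacent vertices). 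A triangle of $\bI$ lying in $N_{\bI}[U]$ has all three of its edges in $E_{\bI}[U]$ (``fully useful'') precisely when it contains at least two vertices of $U$, hence at least one adjacent pair in $\bI[U]$.

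In the $K_2 \cup K_1$ case the only two fully useful triangles share the unique edge of $U$, and in the $\overline{K_3}$ case no fully useful triangles exist; in either case a direct bookkeeping of slots against the $14$ or $15$ useful edges forces $\theta_E \ge 7$ or $\ge 8$, comfortably above $6$. For $U \cong P_3$ there are exactly four fully useful triangles whose pairwise-intersection graph has three edges and admits at most two pairwise disjoint members, and the analogous bookkeeping yields $\theta_E \ge 6$.

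The tight case is $U \cong K_3$. Taking $U = \{N,U_1,U_2\}$ in the standard north-pole-with-pentagonal-rings model of $\bI$, I find $|E_{\bI}[U]| = 12$ and exactly four fully useful triangles $T_1,\dots,T_4$, of which $T_2,T_3,T_4$ are pairwise disjoint while $T_1$ overlaps each of them in a single edge. The key structural fact is that the three edges $NU_4$, $U_1 L_5$, $U_2 L_2 \in E_{\bI}[U]$ pairwise fail to lie in any common clique of $\bI$, so each must be covered by its own clique, and each such ``critical'' clique contributes at most two useful edges. A putative five-clique cover must therefore consist of three critical cliques plus two disjoint fully useful triangles, using $3 \cdot 2 + 2 \cdot 3 = 12$ useful-edge slots with no slack for overlap. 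Running through the three disjoint pairs $(T_i,T_j)$ and the two admissible ``extras'' of each critical clique, one checks in every case that some critical clique's extra must fall inside $T_i \cup T_j$, contradicting distinctness. Hence $\theta_E \ge 6$.

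For the upper bound I construct an acyclic digraph $D$ on $V(\bI) \cup \{a_1,\dots,a_4\}$ with $C(D) = \bI \cup I_4$. By the Dutton--Brigham characterization it suffices to exhibit an acyclic ordering $v_1,\dots,v_{16}$ and an edge clique cover $\{S_{v_i}\}$ of $\bI$ with $S_{v_i} \subseteq \{v_1,\dots,v_{i-1}\}$. I would take an edge clique cover of $\bI$ by triangular faces and order the twelve icosahedron vertices so that each chosen face can be scheduled as the in-neighbour set of a distinct successor vertex, placing the four added sinks $a_1,\dots,a_4$ at the end of the ordering to absorb any face for which no earlier icosahedron vertex is available. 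The main obstacle is the triangle case $U \cong K_3$ of the lower bound: being the only case where the inequality $\theta_E \ge 6$ is tight, it requires a combinatorial ruling-out of every conceivable five-clique configuration.
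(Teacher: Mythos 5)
Your overall strategy coincides with the paper's: the lower bound via Theorem~\ref{thm:Sano} with $m=3$ and a case analysis on the isomorphism type of $\bI[U]$, and the upper bound via an explicit acyclic digraph built from an edge clique cover by triangular faces. Your lower-bound half is in fact more detailed than the paper's (which simply tabulates the values $6,6,7,9$ for the four cases), and your counting argument checks out: the slot bookkeeping in the $K_2\cup K_1$, $\overline{K_3}$ and $P_3$ cases is sound, and in the tight $K_3$ case the three pairwise vertex-disjoint ``critical'' edges (one at each vertex of $U$, lying in no fully useful triangle) together with the no-slack count $3\cdot 2+2\cdot 3=12=|E_{\bI}[U]|$ do force the contradiction you describe for each of the three edge-disjoint pairs from $\{T_2,T_3,T_4\}$. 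One wording fix: $T_2,T_3,T_4$ are pairwise \emph{edge}-disjoint but not vertex-disjoint (each pair shares a vertex of $U$); your argument only needs edge-disjointness, so this is cosmetic.

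The genuine gap is the upper bound. You state that you ``would take'' a 12-face edge clique cover and an ordering of the twelve vertices so that eight of the faces can each be assigned a distinct later icosahedron vertex as common prey, with four appended sinks absorbing the rest --- but you never exhibit the cover, the ordering, or the assignment, and their joint existence is precisely the content of $k(\bI)\le 4$. It is not automatic: since $\theta_E(\bI)=12$ and only four sinks are allowed, you must find twelve faces covering all thirty edges together with an injective assignment of at least eight of them to icosahedron vertices, each face preceding its assigned vertex in a single acyclic order; a careless choice of faces or ordering can easily fail this. The paper resolves this by writing down twelve explicit triangles $S_1,\dots,S_{12}$ and the assignment $S_2\to v_3$, $S_3\to v_1$, $S_4\to v_2$, $S_5\to v_5$, $S_6\to v_6$, $S_7\to v_4$, $S_8\to v_8$, $S_9\to v_7$, with $S_1,S_{10},S_{11},S_{12}$ sent to the four new vertices, and one verifies acyclicity directly. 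Your closing remark that the $K_3$ case of the lower bound is ``the main obstacle'' is therefore somewhat misplaced: you have handled that case, whereas the construction you defer is the part still missing.
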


A proof of Theorem \ref{thm:main} will be given in the next section.

\section{Proof of Theorem \ref{thm:main}}

In this section, we give a proof of Theorem \ref{thm:main}. 

First, we show the lower bound part of the proof. 

\begin{Lem}\label{lem:LB}
$k(\bI) \geq 4$. 
\end{Lem}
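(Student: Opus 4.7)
The plan is to prove $k(\bI) \geq 4$ by contradiction. Suppose $k(\bI) \leq 3$; then there would exist an acyclic digraph $D$ with $V(D) = V(\bI) \cup I_3$ (where $I_3 = \{a_1, a_2, a_3\}$) and $C(D) = \bI \cup I_3$. For each $u \in V(D)$ I would set $S_u := N_D^-(u)$; this is a clique of $C(D)$ of size at most $3$ (since $\bI$ is $K_4$-free and no isolated vertex can lie in a clique of size $\geq 2$), with $S_u \subseteq V(\bI)$ whenever $|S_u| \geq 2$, and the family $\{S_u\}_{u \in V(D)}$ is an edge clique cover of $\bI$.

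First I would establish that each $v \in V(\bI)$ has at least three ``useful'' out-neighbors, meaning out-neighbors $u$ with $v \in S_u$ and $|S_u| \geq 2$. This follows from $N_{\bI}(v) \cong C_5$, $\theta_V(C_5) = 3$, and the fact that each useful $u$ contributes a clique of $C_5$ of size $\leq 2$ to a vertex clique cover of $N_{\bI}(v)$. Consequently, every vertex of $V(\bI)$ must occupy one of the first $12$ positions of any acyclic ordering $v_1, \ldots, v_{15}$, so the last three positions are exactly $I_3$ while $v_{10}, v_{11}, v_{12} \in V(\bI)$.

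The heart of the argument would be a rigidity statement obtained by analyzing $v_{12}$ and $v_{11}$ in turn. Since $v_{12}$'s three out-neighbors are precisely $I_3$, the cliques $S_{a_1}, S_{a_2}, S_{a_3}$ must all contain $v_{12}$ and jointly vertex-cover $N(v_{12}) \cong C_5$. I would then run a case analysis on the number of $a_i$ that are useful for $v_{11}$. The cases where zero or all three of the $a_i$ are useful for $v_{11}$ are ruled out via contradictory edge-covering obligations at $v_{11}$ or at $v_{12}$ (using that $v_{11} \sim v_{12}$ has only two common neighbors). This leaves only the case where $v_{12}$ and exactly two of the $a_i$ are useful for $v_{11}$; writing $\{y_1, y_2\} = N(v_{11}) \cap N(v_{12})$ and letting $\{z_1, z_2\}$, $\{z_1', z_2'\}$ be the remaining neighbors of $v_{11}$, $v_{12}$ respectively, the requirement to cover $N(v_{12})$ and $N(v_{11})$ forces (up to relabeling) $S_{a_1} = \{v_{11}, v_{12}, y_1\}$, $S_{a_2} = \{v_{11}, v_{12}, y_2\}$, $S_{a_3} = \{v_{12}, z_1', z_2'\}$, and $S_{v_{12}} = \{v_{11}, z_1, z_2\}$.

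Finally I would derive a contradiction at $v_{10}$. Its out-neighbors lie in $\{v_{11}, v_{12}, a_1, a_2, a_3\}$, and the four sets $S_{v_{12}}, S_{a_1}, S_{a_2}, S_{a_3}$ are now explicitly known, their members lying entirely in $\{v_{11}, v_{12}, y_1, y_2, z_1, z_2, z_1', z_2'\}$. Going through each of the ten candidate identities for $v_{10}$ in $V(\bI) \setminus \{v_{11}, v_{12}\}$ shows that $v_{10}$ belongs to at most one of the four cliques above; even counting $S_{v_{11}}$ as a possible fifth useful out-neighbor, $v_{10}$'s useful out-degree is at most two, contradicting the earlier lower bound of three. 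The main obstacle is the rigidity step for $v_{11}$: one must carefully eliminate several plausible configurations of how the three $S_{a_i}$'s can vertex-cover $N(v_{12}) \cong C_5$, tracking in each which of them contain $v_{11}$, before the unique admissible configuration is pinned down.
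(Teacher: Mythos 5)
Your argument is correct, but it follows a genuinely different route from the paper. The paper simply invokes its Theorem \ref{thm:Sano} with $m=3$ and then computes $\theta_E(E_{\bI}[U];N_{\bI}[U])$ for the four isomorphism types of $\bI[U]$ with $|U|=3$ (obtaining the values $6,6,7,9$, hence $k(\bI)\geq 6-3+1=4$); the combinatorial content is thus outsourced to a cited general lower bound plus a small table of local edge clique cover numbers. You instead argue directly by contradiction from $k(\bI)\leq 3$: the observation that each vertex needs at least $\theta_V(C_5)=3$ ``useful'' out-neighbors is the localized Opsut bound, the placement of $V(\bI)$ in the first $12$ positions of the acyclic ordering is the counting step hidden inside Theorem \ref{thm:Sano}, and your rigidity analysis of $S_{a_1},S_{a_2},S_{a_3},S_{v_{12}}$ followed by the deficiency at $v_{10}$ replaces the paper's computation of the minimum $6$. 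I checked the structural facts you rely on: the icosahedron is $K_4$-free, $|N_{\bI}(u)\cap N_{\bI}(v)|=2$ for adjacent $u,v$ with the two common neighbors non-adjacent, the sets $\{y_1,y_2\}$, $\{z_1,z_2\}$, $\{z_1',z_2'\}$ are pairwise disjoint, and $z_1\sim z_2$, $z_1'\sim z_2'$ hold because $N_{\bI}(v)\cong C_5$; so the forced configuration in your middle step and the final count of at most two useful out-neighbors for $v_{10}$ both go through (your case split at $v_{11}$ should also explicitly dispose of ``exactly one $a_i$ useful,'' though that case dies immediately by counting). What each approach buys: the paper's proof is short and modular but leans on an external theorem and on unverified-in-print values; yours is self-contained, needing only the in-neighborhood/edge-clique-cover correspondence, at the cost of a longer case analysis, and it arguably explains more concretely why three added vertices cannot suffice.
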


\begin{proof}
By Theorem \ref{thm:Sano} with $m=3$, 
we have 
\[
k(\bI) \geq \min_{U \in {V(\bI) \choose 3}} 
\theta_E(E_{\bI}[U]; N_{\bI}[U]) -2. 
\]
There are the following $4$ cases 
for the subgraph $\bI[U]$ of an icosahedron $\bI$ induced by 
$U \in {V(\bI) \choose 3}$ 
(see Figure \ref{fig:neighbor}): \\
(i) If $\bI[U]$ is a triangle $K_3$, 
then $\theta_E(E_{\bI}[U]; N_{\bI}[U])=6$. \\
(ii) If $\bI[U]$ is a path $P_3$ with $3$ vertices, 
then $\theta_E(E_{\bI}[U]; N_{\bI}[U])=6$. \\
(iii) If $\bI[U]=K_2 \cup I_1$, 
then $\theta_E(E_{\bI}[U]; N_{\bI}[U])=7$. \\
(iv) If $\bI[U]=I_3$, 
then $\theta_E(E_{\bI}[U]; N_{\bI}[U])=9$. \\
\begin{figure}
\begin{center}
\includegraphics[width=338pt]{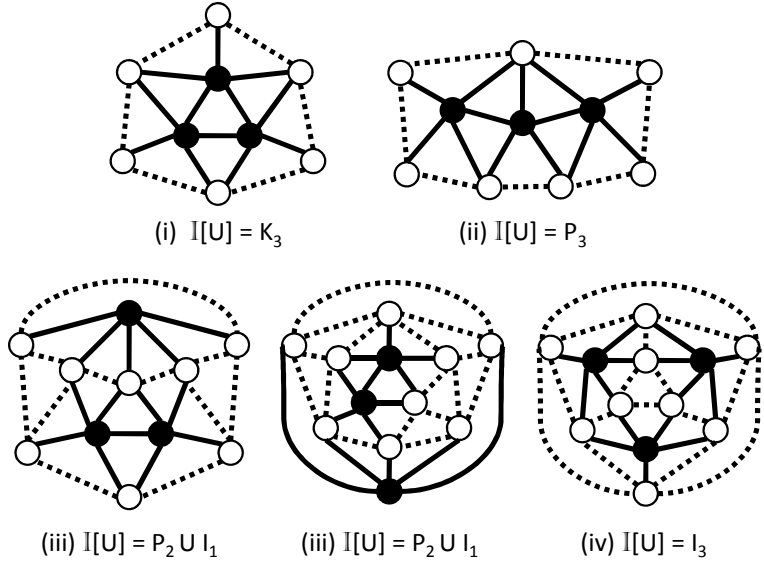} 
\caption{The set $E_{\bI}[U]$ of edges in the subgraph $N_{\bI}[U]$}
\label{fig:neighbor}
\end{center}
\end{figure}
Thus it holds that 
$\min_{U \in {V(\bI) \choose 3}} \theta_E(E_{\bI}[U]; N_{\bI}[U])=6$. 
Hence we have $k(\bI) \geq 4$. 
\end{proof}

The edge clique cover number of an icosahedron is given as follows: 

\begin{Prop}
$\theta_E(\bI)=12$. 
\end{Prop}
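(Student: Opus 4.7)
The plan is to prove the two bounds $\theta_E(\bI) \leq 12$ and $\theta_E(\bI) \geq 12$ separately, both driven by the facial structure of $\bI$. Two preliminary observations are used throughout: each of the $30$ edges of $\bI$ lies in exactly two of the $20$ triangular faces; and since the neighborhood of every vertex is a $5$-cycle, $\bI$ contains no $K_4$, so every clique in $\bI$ is either a single edge or one of these $20$ face-triangles.

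For the upper bound, I would exhibit an edge clique cover consisting of $12$ face-triangles. Covering every edge with $12$ of the $20$ faces is equivalent to discarding $8$ faces that are pairwise edge-disjoint, so that no edge loses both of its incident faces. Such a family of $8$ edge-disjoint faces can be given explicitly (for instance by choosing, at each vertex $v$, two faces corresponding to two non-adjacent edges of the $5$-cycle $N_\bI(v)$ in a globally consistent way); double counting vertex-face incidences shows that such a family contains $12 \cdot 2 / 3 = 8$ faces.

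For the lower bound, let $\mathcal{C}$ be any edge clique cover, let $t$ be the number of face-triangles in $\mathcal{C}$, and let $x_i$ be the number of edges of $\bI$ lying in exactly $i$ face-triangles of $\mathcal{C}$ for $i \in \{0,1,2\}$. The $x_0$ edges uncovered by triangles of $\mathcal{C}$ must each appear as a single-edge clique of $\mathcal{C}$, so $|\mathcal{C}| \geq t + x_0$. Combining $x_0 + x_1 + x_2 = 30$ with $x_1 + 2 x_2 = 3t$ yields
\[
|\mathcal{C}| \geq t + x_0 = 10 + \tfrac{1}{3}(2 x_0 + x_2),
\]
so it suffices to show $2 x_0 + x_2 \geq 6$. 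Since integrality of $t$ forces $x_2 \equiv x_0 \pmod 3$, a short check leaves only three borderline triples $(t, x_0, x_2) \in \{(10,0,0),\,(10,1,1),\,(11,0,3)\}$ to eliminate.

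The main obstacle, and the unifying tool for all three exclusions, is the structural fact that at most $8$ faces of $\bI$ are pairwise edge-disjoint. This follows from a double count: each vertex of $\bI$ has degree $5$, so at most $\lfloor 5/2 \rfloor = 2$ pairwise edge-disjoint faces contain it, and summing over the $12$ vertices gives $3|S| \leq 24$ for any edge-disjoint family $S$, i.e.\ $|S| \leq 8$. Case $(10,0,0)$ is ruled out immediately. Case $(11,0,3)$ would force the $9$ unselected faces to be pairwise edge-disjoint (otherwise some shared edge would be doubly-unselected and hence uncovered), contradicting the bound. Case $(10,1,1)$ would force $9$ of the $10$ unselected faces (namely, those obtained after removing one endpoint of the unique doubly-unselected edge) to be pairwise edge-disjoint, again a contradiction. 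This completes $|\mathcal{C}| \geq 12$ and hence $\theta_E(\bI) = 12$.
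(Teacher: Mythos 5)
Your lower-bound argument is correct but takes a genuinely different route from the paper's. The paper argues locally at vertices: since $N_{\bI}(v)\cong C_5$ and $\theta_V(C_5)=3$, every vertex lies in at least $3$ cliques of any edge clique cover, so counting vertex--clique incidences gives $36\le N\le 3r$ and hence $r\ge 12$ in three lines. You instead count edge--triangle incidences among the $20$ faces, use the integrality of $t$ to isolate three borderline triples, and kill each with the lemma that $\bI$ has at most $8$ pairwise edge-disjoint faces. I checked your bookkeeping: $|\mathcal{C}|\ge t+x_0=10+\tfrac13(2x_0+x_2)$, the congruence $x_2\equiv x_0\pmod 3$, the reduction to $(t,x_0,x_2)\in\{(10,0,0),(10,1,1),(11,0,3)\}$, and all three exclusions are sound (in case $(10,1,1)$ you presumably mean discarding one of the two \emph{unselected faces} containing the doubly-unselected edge, not an ``endpoint''). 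Your approach is longer but buys extra structure -- it shows any $12$-triangle cover doubly covers exactly $6$ edges and is exactly the complement of a maximum family of edge-disjoint faces -- and it is self-contained in the facial combinatorics, whereas the paper's argument is an instance of the local $\theta_V$ bound it already uses elsewhere.

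The one genuine gap is in the upper bound. Your reduction is clean: a $12$-face cover exists if and only if there are $8$ pairwise edge-disjoint faces (each edge lies in exactly $2$ faces, so deleting an edge-disjoint family leaves every edge with a surviving face). But you never exhibit such a family; the parenthetical recipe of ``choosing, at each vertex, two faces corresponding to two non-adjacent edges of the $5$-cycle $N_\bI(v)$ in a globally consistent way'' is precisely the assertion that needs proof, and the local condition does not globalize automatically -- some locally admissible partial choices (e.g.\ certain pairs of opposite cap faces) cannot be extended to $8$ faces, so an existence claim without a witness is not yet a proof. The fix is easy: either write down $8$ such faces explicitly (in the standard top/upper-pentagon/lower-pentagon/bottom coordinates one can take the two cap faces at the apex on $u_1u_2$ and $u_3u_4$, their analogues at the antipode, and four suitable band faces), or simply observe that the complement of the explicit $12$-clique cover $S_1,\dots,S_{12}$ used in the paper's Lemma~3.3 is automatically such a family. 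With that witness supplied, your proof is complete.
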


\begin{proof}
Let $\cF=\{S_1, \ldots, S_r\}$ be an edge clique cover 
of $\bI$ with the minimum size $r:=\theta_E(\bI)$. 
For any vertex $v \in V(\bI)$, 
we have 
\begin{equation}\label{eq:001}
|\{S_i \in \cF \mid v \in S_i \}| \geq 3 
\end{equation}
since $\theta_V(N_{\bI}(v))=\theta_V(C_5)=3$. 
Let 
\[
N:=|\{(v, S_i) \in V(\bI) \times \cF \mid v \in S_i \}|. 
\]
Since $|V(\bI)|=12$ and (\ref{eq:001}), 
we have $N \geq 12 \times 3$. 
Since the maximum size of a clique of $\bI$ is $3$, 
we have $N \leq 3 \times r$. 
Therefore it holds that $r \geq 12$. 
\end{proof}

Second, we show the upper bound part of the proof. 

\begin{Lem}\label{lem:UB}
$k(\bI) \leq 4$. 
\end{Lem}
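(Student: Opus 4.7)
The plan is to prove $k(\bI) \leq 4$ by exhibiting an explicit acyclic digraph $D$ on $16$ vertices, namely $V(\bI) \cup \{a_1, a_2, a_3, a_4\}$, whose competition graph is $\bI \cup I_4$. I would first label the twelve vertices of $\bI$ in a convenient way, for instance as two poles $N$ and $S$ together with a top pentagon $x_1, \ldots, x_5$ and a bottom pentagon $y_1, \ldots, y_5$ glued antiprismatically, and list both the $30$ edges and the $20$ triangular faces of $\bI$. A useful preliminary observation is that every triangle of the icosahedron graph is in fact one of these twenty faces, because each vertex has a $5$-cycle neighborhood and so a triangle through any vertex must use two consecutive neighbors; hence any edge clique cover by triangles is simply a choice of faces.

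Next I would exhibit an edge clique cover $\mathcal{F} = \{T_1, \ldots, T_{12}\}$ of $\bI$ realizing the lower bound $\theta_E(\bI) = 12$ just established (equivalently, select twelve of the twenty faces so that the eight omitted faces are pairwise edge-disjoint, which is forced by a double-counting check). I would then pick a linear ordering $v_1, v_2, \ldots, v_{12}, a_1, a_2, a_3, a_4$ of the $16$ vertices together with an assignment of each clique $T_i$ to a distinct \emph{sink} $s_i$ strictly later than every vertex of $T_i$ in the ordering. The digraph $D$ is defined by declaring the in-neighborhood of each $s_i$ to equal $T_i$, with all remaining vertices having empty in-neighborhood.

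Given such a construction, verifying the claim is routine. Since all arcs go from earlier to later in the ordering, $D$ is acyclic. Since each in-neighborhood is a clique of $\bI$, the competition graph $C(D)$ contains no edges outside $E(\bI)$; since $\mathcal{F}$ covers $E(\bI)$, every edge of $\bI$ is realized in $C(D)$ via the corresponding sink; and since each $a_j$ has no outgoing arcs, the vertices $a_1, \ldots, a_4$ are isolated in $C(D)$. Therefore $C(D) = \bI \cup I_4$, giving $k(\bI) \leq 4$.

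The main obstacle is the simultaneous combinatorial choice of the cover $\mathcal{F}$, the ordering of $V(\bI)$, and the sink assignment: the only icosahedral sinks that can hold a triangle are $v_4, \ldots, v_{12}$, giving $9$ triangle-capable icosahedral sinks plus the $4$ extras, i.e.\ exactly $13$ candidates for $12$ triangles, leaving almost no slack. A natural strategy is to start with a single face $T_1 = \{v_1, v_2, v_3\}$ sunk at $v_4$, and then to list the remaining icosahedral vertices in a breadth-first manner along adjacent faces, so that each newly listed vertex closes off a fresh face that can serve as the in-neighborhood of the next sink; the last few faces, whose third vertex is among the final positions of the ordering, are absorbed by the sinks $a_1, \ldots, a_4$. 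Once a concrete ordering and assignment are found, the verification of $C(D) = \bI \cup I_4$ is by direct inspection of the $30$ edges of $\bI$.
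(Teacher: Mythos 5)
Your overall strategy is exactly the one the paper uses: take an edge clique cover of $\bI$ by twelve triangular faces, linearly order $V(\bI)$ together with four new vertices, assign each triangle to a distinct sink occurring strictly after all three of its vertices, and check that the resulting digraph is acyclic with competition graph $\bI \cup I_4$. Your verification logic is sound (in-neighborhoods are cliques, so no spurious edges arise; the cover guarantees every edge of $\bI$ is realized; the four new vertices have no out-neighbors, hence are isolated), and your preliminary observations are correct: every triangle of $\bI$ is a face because each vertex link is a $5$-cycle, and a $12$-face cover must omit eight pairwise edge-disjoint faces since each edge lies on exactly two faces.

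However, the proof is incomplete as written: you never actually produce the cover, the ordering, and the sink assignment, and their simultaneous existence is precisely the nontrivial content of this lemma. As you yourself note, the counting is tight --- thirteen candidate sinks for twelve triangles --- so it is not automatic that your breadth-first heuristic succeeds; a priori the process could leave a required face whose third vertex sits too late in the ordering with no available sink. Even the existence of eight pairwise edge-disjoint faces (an independent set of size $8$ in the dodecahedral dual) is asserted rather than exhibited. The paper closes exactly this gap by writing down explicit data: twelve faces $S_1,\dots,S_{12}$, with $S_2,\dots,S_9$ sunk at eight vertices of $\bI$ and $S_1,S_{10},S_{11},S_{12}$ sunk at four new vertices $a,b,c,d$, followed by a direct check of acyclicity and coverage. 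To complete your argument you must supply such a witness (any one will do) and perform that inspection.
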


\begin{figure}
\begin{center}
\includegraphics[width=334pt]{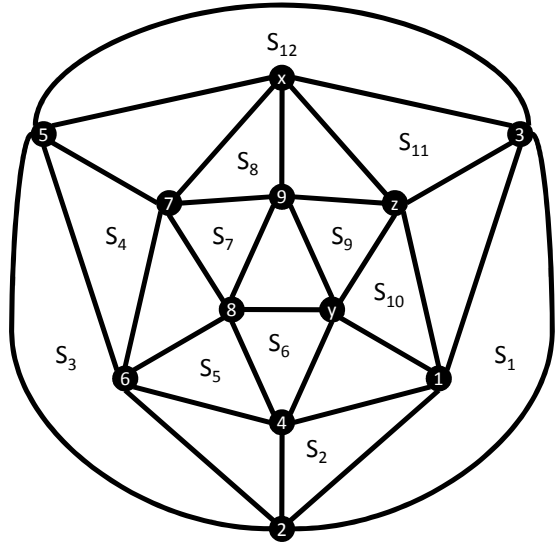} 
\caption{An edge clique cover of an icosahedron $\bI$}
\label{fig:ordering}
\end{center}
\end{figure}

\begin{proof}
Let 
\[
V(\bI) := \{v_1, v_2, v_3, 
v_4, v_5, v_6, v_7, v_8, v_9, v_x, v_y, v_z \}. 
\]
and suppose that 
the adjacencies between two vertices are given as 
Figure \ref{fig:ordering}. 
Let 
\begin{eqnarray*}
&& 
S_1 := \{ v_1, v_2, v_3\}, \quad
S_2 := \{ v_1, v_2, v_4\}, \quad
S_3 := \{ v_2, v_5, v_6\}, \quad
S_4 := \{ v_5, v_6, v_7\}, \\ &&
S_5 := \{ v_4, v_6, v_8\}, \quad
S_6 := \{ v_4, v_8, v_y\}, \quad
S_7 := \{ v_7, v_8, v_9\}, \quad
S_8 := \{ v_7, v_9, v_x\}, \\ &&
S_9    := \{ v_9, v_y, v_z\}, \quad
S_{10} := \{ v_1, v_y, v_z\}, \quad
S_{11} := \{ v_3, v_x, v_z\}, \quad
S_{12} := \{ v_3, v_5, v_x\}. 
\end{eqnarray*}
Then the family 
$\{S_1, S_2, \ldots, S_{12}\}$ 
is an edge clique cover of an icosahedron $\bI$ 
(see Figure \ref{fig:ordering}). 

Now, we define a digraph $D$ by the following: 
\[
V(D):=V(\bI) \cup \{ a,b,c,d \}, 
\]
\begin{eqnarray*}
S_1 = \{ v_1, v_2, v_3\} \to a, &&
S_2 = \{ v_1, v_2, v_4\} \to v_3, \\
S_3 = \{ v_2, v_5, v_6\} \to v_1, &&
S_4 = \{ v_5, v_6, v_7\} \to v_2, \\
S_5 = \{ v_4, v_6, v_8\} \to v_5, &&
S_6 = \{ v_4, v_8, v_y\} \to v_6, \\
S_7 = \{ v_7, v_8, v_9\} \to v_4, &&
S_8 = \{ v_7, v_9, v_x\} \to v_8, \\
S_9    = \{ v_9, v_y, v_z\} \to v_7, &&
S_{10} = \{ v_1, v_y, v_z\} \to b, \\
S_{11} = \{ v_3, v_x, v_z\} \to c, &&
S_{12} = \{ v_3, v_5, v_x\} \to d, 
\end{eqnarray*}
where $a, b, c, d$ are new vertices, 
and ``$S \to v$" means that we make an arc from each vertex in $S$ to 
the vertex $v$. 
Then the digraph $D$ is acyclic and 
$C(D)=\bI \cup \{a,b,c,d\}$. 
Hence we have $k(\bI) \leq 4$. 
\end{proof}

Now, we complete the proof of Theorem \ref{thm:main}. 

\begin{proof}[Proof of Theorem \ref{thm:main}]
It follows from Lemmas \ref{lem:LB} and \ref{lem:UB}. 
\end{proof}

\section{Closing Remark}

In this paper, we gave the exact values of the competition numbers 
of regular polyhedra, especially gave the competition number of 
an icosahedron. 
It would be interesting to compute the competition numbers 
of some triangulations of a sphere.

\section*{Acknowledgment}

The author was supported by JSPS Research Fellowships 
for Young Scientists. 
The author was also supported partly by Global COE program 
``Fostering Top Leaders in Mathematics".


\end{document}